\newtheorem{thm}{Theorem}[section]
\newtheorem*{thm*}{Theorem}
\newtheorem{cor}[thm]{Corollary}
\newtheorem*{cor*}{Corollary}
\newtheorem{lem}[thm]{Lemma}
\newtheorem*{con*}{Conjecture}
\newtheorem*{prob*}{Problem}
\theoremstyle{definition}
\newtheorem{defn}[thm]{Definition}
\theoremstyle{remark}
\newtheorem{rem}[thm]{Remark}
\begin{document}
\title{Strongly non embeddable metric spaces}
\author{Casey Kelleher}
\address{Department of Mathematics, California Polytechnic State University, San Luis Obispo, CA 93407}
\email{ckellehe@calpoly.edu}
\author{Daniel Miller}
\address{Department of Mathematics, University of Nebraska at Omaha, Omaha, NE 68182}
\email{dkmiller@unomaha.edu}
\author{Trenton Osborn}
\address{Department of Mathematics, Baylor University, Waco, TX 76798}
\email{trenton\_osborn@baylor.edu}
\author{Anthony Weston}
\address{Department of Mathematics and Statistics, Canisius College, Buffalo, NY 14208}
\email{westona@canisius.edu}

\subjclass[2000]{46C05, 46T99}

\keywords{Locally finite metric space, coarse embedding, uniform embedding.}

\begin{abstract}
Enflo \cite{Enf} constructed a countable metric space that may not be uniformly embedded into any
metric space of positive generalized roundness. Dranishnikov, Gong, Lafforgue and Yu \cite{Dra}
modified Enflo's example to construct a locally finite metric space that may not be coarsely embedded into any
Hilbert space. In this paper we meld these two examples into one simpler construction.
The outcome is a locally finite metric space $(\mathfrak{Z}, \zeta)$ which is
strongly non embeddable in the sense that it may not be embedded uniformly or coarsely
into any metric space of non zero generalized roundness. Moreover, we
show that both types of embedding may be obstructed by a common recursive principle.
It follows from our construction that any metric space which is
Lipschitz universal for all locally finite metric spaces may not be embedded uniformly or coarsely
into any metric space of non zero generalized roundness. Our construction is then adapted to show that the
group $\mathbb{Z}_\omega=\bigoplus_{\aleph_0}\mathbb{Z}$ admits a Cayley graph which may not be coarsely
embedded into any metric space of non zero generalized roundness. Finally, for each $p \geq 0$ and each
locally finite metric space $(Z,d)$, we prove the existence of a Lipschitz injection $f : Z \to \ell_{p}$.
\end{abstract}
\maketitle

\section{Introduction}

The notion of the generalized roundness of a metric space was introduced by Enflo \cite{Enf} as a means
to show that not every separable metric space may be uniformly embedded into a Hilbert space.
The crux of Enflo's proof was the construction of a countable metric space that may not be uniformly
embedded into any metric space of positive generalized roundness. Enflo then noted that Hilbert
space has generalized roundness $2$ in order to draw the desired conclusion. As in \cite{Enf}, the
terms uniform embedding and generalized roundness are defined in the following way.

\begin{defn}
A \textit{uniform embedding} of a metric space $(X,d)$ into a metric space $(Y,\varsigma)$
is an injection $f : X \to Y$ such that both $f$ and $f^{-1}$ are uniformly continuous,
where the domain of definition of $f^{-1}$ is its natural domain as a subspace of $(Y,\varsigma)$.
\end{defn}

\begin{defn}\label{grdef}
The \textit{generalized roundness} $\mathfrak{q}(X)$ of a metric space $(X,d)$ is the supremum
of the set of all $p \geq 0$ that satisfy the following property: For all integers $n \geq 2$ and all
choices of (not necessarily distinct) points $x_{1}, \ldots, x_{n}, y_{1}, \ldots, y_{n} \in X$,
\begin{eqnarray}\label{ONE}
\sum\limits_{1 \leq i < j \leq n} \bigl\{ d(x_{i},x_{j})^{p} + d(y_{i},y_{j})^{p} \bigl\}
& \leq & \sum\limits_{1 \leq i,j \leq n} d(x_{i},y_{j})^{p}.
\end{eqnarray}
The configuration of points $(\mathscr{X},\mathscr{Y})$ that underlies (\ref{ONE}), where $\mathscr{X}$ denotes
$x_{1}, \ldots, x_{n}$ and $\mathscr{Y}$ denotes $y_{}, \ldots, y_{n}$, is called a \textit{double-$n$-simplex}.
A pair of the form $(x_{i},x_{j})$ or $(y_{i},y_{j})$, where $i \not= j$, will be called an \textit{edge} in
$(\mathscr{X},\mathscr{Y})$. While a pair of the form $(x_{i},y_{j})$, with no restriction on $i$ and $j$,
will be called a \textit{connecting line} in $(\mathscr{X},\mathscr{Y})$.
\end{defn}

\noindent Metric spaces of generalized roundness $0$ may have very rigid uniform structures. For example, the metric
space constructed in the proof of \cite[Theorem 2.1]{Enf} may not be uniformly embedded into any metric space of
positive generalized roundness. Dranishnikov, Gong, Lafforgue and Yu \cite[Proposition 6.3]{Dra} modified Enflo's
example to construct a metric space that may not be coarsely embedded into any Hilbert space. A simple
check shows that this modified metric space also has generalized roundness $0$. It follows from these examples that
the quality of a metric space having generalized roundness $0$ may induce unique properties on the given space, not
only on the fine scale of uniform structure but also on the global scale of coarse geometry. The notion of a coarse
embedding was introduced by Gromov \cite{Gro}.

\begin{defn}\label{coarse}
A map $f$ from a metric space $(X,d)$ into a metric space $(Y,\varsigma)$ is a 
{\it coarse embedding} if there exist two non decreasing functions ${\rho}_{1}, \rho_{2}: [0, {\infty}) \to [0, {\infty})$ such that
\begin{enumerate}
\item ${\rho}_{1}(d(x, y)) \le \varsigma(f(x), f(y)) \le {\rho}_{2}(d(x, y))$ for all $x, y\in X$, and

\item $\lim_{t\to\infty}{\rho}_{1}(t) = \infty$.
\end{enumerate}
\end{defn}

\noindent Although the focus of this paper is squarely on uniform and coarse embeddings we will have the occasion
to refer to Lipschitz embeddings. A \textit{Lipschitz embedding} of a metric space $(X,d)$ into a metric space
$(Y,\varsigma)$ is an injection $f : X \to Y$ such that both $f$ and $f^{-1}$ are Lipschitz
maps of order $1$, where the domain of definition of $f^{-1}$ is its natural domain as a subspace of $(Y,\varsigma)$.
Our interest in Lipschitz embeddings stems from their quality of being both uniform and
coarse embeddings. We further recall that a metric space $(Y,\varsigma)$ is \textit{Lipschitz universal for a class}
$\mathscr{U}$ of metric spaces if every $(X,d) \in \mathscr{U}$ admits a Lipschitz embedding into $Y$.

In this paper we meld the examples of Enflo \cite[Theorem 2.1]{Enf} and Dranishnikov \textit{et al}.\
\cite[Proposition 6.3]{Dra} into one simpler construction. The outcome is a locally finite metric space
$(\mathfrak{Z}, \zeta)$ which is strongly non embeddable in the sense that it may not be embedded uniformly or coarsely
into any metric space of non zero generalized roundness (Theorems \ref{theorem2} and \ref{theorem3}).
Our approach shows that both types of embedding may be obstructed by a common recursive principle (Theorem \ref{theorem1}).
It follows that any metric space which is Lipschitz
universal for all locally finite metric spaces is neither uniformly nor coarsely embeddable into any metric
space of non zero generalized roundness (Corollary \ref{lulfm}). The main construction is then adapted to show that the group
$\mathbb{Z}_\omega=\bigoplus_{\aleph_0}\mathbb{Z}$ admits a Cayley graph which may not be coarsely embedded
into any metric space of non zero generalized roundness. Remark \ref{noboots} shows that it is not possible
to construct an analogue of the metric space $(\mathfrak{Z}, \zeta)$ that has positive generalized roundness.
Finally, for each $p \geq 0$ and each locally finite metric space $(Z,d)$, we prove the existence of a Lipschitz
injection $Z \to \ell_{p}$ (Theorem \ref{discrete}). This indicates that the two types of embeddings under
consideration cannot be weakened if one wants to maintain an obstruction to embedding into metric spaces of
non zero generalized roundness.

To close out this section, we note that examples of metric spaces of non zero generalized roundness include
all real normed spaces that are linearly isometric to a subspace of some $L_{p}$-space with $0 < p \leq 2$,
all metric trees, all ultrametric spaces, all Hadamard manifolds, hyperbolic spaces such as $\mathbb{H}_{\mathbb{C}}^{k}$,
and many CAT($0$)-spaces. By way of comparison, provided $p > 2$, $L_{p}$-spaces of dimension at least $3$ are examples of
metric spaces of generalized roundness $0$. The same is true of certain CAT($0$)-spaces as well as the Schatten $p$-classes
$\mathcal{C}_{p}$ provided $0 < p < 2$. The bulk of these examples are discussed more expansively in the
survey paper \cite{Paw}.

\section{The Main Construction}
Let $\mathsf{L}_n$ be the set $\{k n^{-n}:1\leq k \leq n^{2n}\}$ endowed with the cyclic metric
$\chi(x,y)=\min{\{|x-y|,n^n-|x-y|\}}$. Let $\mathsf{M}_n$ be the cartesian product $\Pi_{i=1}^{n^n}\mathsf{L}_n$
endowed with the supremum metric:
\[
d_{\infty}(x,y)=\max_{1\leq i\leq n^n} \chi(x_{i},y_{i}),
\]
where in this case $x=( x_1,\cdots ,x_{n^n})$ and $y=( y_1,\cdots ,y_{n^n})$.

\begin{defn}
Let $0\leq |t|<n$ and $0 < m < n$. If $x=( x_i ) $ and $y=( y_i )$, then we call
$\{x,y\}\subset \mathsf{M}_n$ a \textit{$(t,m)$-pair} if the following two conditions hold:
\begin{enumerate}
\item $\chi(x_i,y_i)=2^t$ whenever $x_i\ne y_i$, and
\item the cardinality of the set $\{i:x_i\ne y_i\}$ is $n^m$. 
\end{enumerate}
We denote the set of $(t,m)$-pairs by $\Theta_t^m$.
\end{defn}

\begin{defn}
If $(\mathscr{X},\mathscr{Y})$ is a double-$n$-simplex in $\mathsf{M}_n$ and $0<m<n$, $0\leq|t|<n$,
then we call $(\mathscr{X},\mathscr{Y})$ a \textit{$(t,m)$-simplex} if each connecting line is a $(t,m+1)$-pair,
and each edge is a $(t+1,m)$-pair.
\end{defn}

\noindent For the rest of the paper we will only consider the case where $n$ is even.
Also, note that the $(t,m$)-pairs $\{x,y\}$ and $\{y,x\}$ will be regarded as identical,
as will be the double-$n$-simplices $(\mathscr{X},\mathscr{Y})$ and $(\mathscr{Y},\mathscr{X})$.

\begin{lem}\label{lemma1}
If $0<m<n$, $|t|<n$, there exists a $(t,m)$-simplex in $\mathsf{M}_n$.
\end{lem}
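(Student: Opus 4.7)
My plan is to construct the simplex explicitly, writing each coordinate of every $x_i$ and $y_j$ as one of three values in $\mathsf{L}_n$, denoted $0$, $2^t$, and $2^{t+1}$ (here $0$ is interpreted as a suitable basepoint so that all three values lie in $\mathsf{L}_n$; by translation invariance of $\chi$ this is harmless). Since $n$ is even and $|t|<n$, one verifies that $2^{t+1}\leq n^n/2$, so no cyclic wraparound occurs and the relevant $\chi$-distances are
\begin{equation*}
\chi(0,2^t)=2^t,\qquad \chi(0,2^{t+1})=2^{t+1},\qquad \chi(2^t,2^{t+1})=2^t.
\end{equation*}
These are precisely the distances demanded by the definition: $2^{t+1}$ for edges and $2^t$ for connecting lines.

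For the combinatorial layout, I partition $[n^n]$ into two disjoint ``active'' zones $B^X = B^X_1\sqcup\cdots\sqcup B^X_n$ and $B^Y = B^Y_1\sqcup\cdots\sqcup B^Y_n$ with each $|B^X_i|=|B^Y_j|=n^m/2$, together with an inactive remainder. This is feasible because $n$ being even makes $n^m/2$ an integer, and $|B^X|+|B^Y|=n^{m+1}\leq n^n$ since $m<n$. On every inactive coordinate I set each $x_i$ and $y_j$ equal to $0$. On $B^X$, I set $x_i=2^{t+1}$ on $B^X_i$ and $x_i=0$ elsewhere in $B^X$, while every $y_j$ takes the uniform value $2^t$ on all of $B^X$. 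Symmetrically, on $B^Y$ I set $y_j=2^{t+1}$ on $B^Y_j$ and $y_j=0$ elsewhere in $B^Y$, while every $x_i$ takes the uniform value $2^t$ on all of $B^Y$.

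The verification is then a routine coordinate count. For $i\neq j$, the points $x_i$ and $x_j$ agree outside $B^X_i\cup B^X_j$ (both $0$ on inactive coordinates and on $B^X\setminus(B^X_i\cup B^X_j)$, both $2^t$ on $B^Y$), and on $B^X_i\cup B^X_j$ they take complementary values in $\{0,2^{t+1}\}$. This gives exactly $n^m$ differing coordinates at $\chi$-distance $2^{t+1}$, so $\{x_i,x_j\}\in\Theta_{t+1}^m$; the $y$-edges are symmetric. A connecting line $x_i, y_j$ differs on every coordinate of $B^X\cup B^Y$: on $B^X$ the $y_j$-value $2^t$ is at distance $2^t$ from each of $0$ and $2^{t+1}$, and the situation on $B^Y$ is symmetric, producing $n^{m+1}$ disagreements all at distance $2^t$. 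Thus $\{x_i,y_j\}\in\Theta_t^{m+1}$, and $(\mathscr{X},\mathscr{Y})$ is a $(t,m)$-simplex.

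The substantive step is guessing the three-level template $\{0,2^t,2^{t+1}\}$, which is forced by the simultaneous requirement that cross distances equal $2^t$ while same-cluster distances equal $2^{t+1}$; after that, the block decomposition into sub-blocks of size $n^m/2$ makes the counts fall out automatically. The only pitfalls are arithmetic: checking that $n^m/2$ is an integer, that $n^{m+1}\leq n^n$, and that $2^{t+1}\leq n^n/2$ so the cyclic metric does not fold up the three chosen values. Each of these follows directly from the hypotheses that $n$ is even, $0<m<n$, and $|t|<n$.
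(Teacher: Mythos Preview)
Your construction is essentially identical to the paper's: the paper divides the $n^n$ coordinates into three groups of sizes $n^{m+1}/2$, $n^{m+1}/2$, and $n^n-n^{m+1}$, with the first group playing the role of your $B^X=B^X_1\sqcup\cdots\sqcup B^X_n$, the second your $B^Y$, and the third your inactive remainder, and then fills in the values $0$, $2^t$, $2^{t+1}$ exactly as you do. Your write-up is in fact more careful than the paper's, since you explicitly address the cyclic-wraparound issue and the divisibility checks that the paper leaves implicit.
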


\begin{proof}
Divide the $n^n$ coordinates into $3$ groups, the first two of size $n^{m+1}/2$ and the third of size $n^n-n^{m+1}$.
All coordinates in the third group are $0$ in both the $x_i$'s and the $y_i$'s. For the $x_i$'s, all coordinates in
the second group are $2^{t}$, and the first group is divided up into $n$ subgroups of size $n^m/2$. As $i$ varies,
the $i$th subgroup is filled with $2^{t+1}$, and the other $n-1$ subgroups are filled with $0$.
For the $y_i$'s, just switch the first and second groups. The $x_i$'s and $y_i$'s may be visualized
in the following way:
\[
x_i=\Biggl(\,
	\overbrace{
 		\underbrace{0,\cdots,0}_{n^m/2}
	,\cdots,
		\underbrace{2^{t+1},\cdots,2^{t+1}}_{n^m/2}
	,\cdots,
		\underbrace{0,\cdots,0}_{n^m/2}}^{n^{m+1}/2},
	\overbrace{2^t,\cdots,2^t}^{n^{m+1}/2},
	\overbrace{0,\cdots,0}^{n^n-n^{m+1}}
\,\Biggl),
\]
\[
y_i=\Biggl(\,
	\overbrace{2^t,\cdots,2^t}^{n^{m+1}/2},
	\overbrace{
 		\underbrace{0,\cdots,0}_{n^m/2}
	,\cdots,
		\underbrace{2^{t+1},\cdots,2^{t+1}}_{n^m/2}
	,\cdots,
		\underbrace{0,\cdots,0}_{n^m/2}}^{n^{m+1}/2}
	,\overbrace{0,\cdots,0}^{n^n-n^{m+1}}
\,\Biggl).
\]
It is easy to verify that each $x_i$ and $y_j$ all differ in exactly $n^{m+1}$ coordinates by $2^t$,
while each $x_i$ and $x_j$ differ in exactly $n^m$ coordinates by $2^{t+1}$.
\end{proof}

\begin{lem}\label{lemma2}
The isometry group of $\mathsf{M}_n$ acts transitively on $\Theta_t^m$ whenever $m\geq 1$.
\end{lem}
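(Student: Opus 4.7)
The plan is to exhibit a sufficiently rich subgroup of $\mathrm{Isom}(\mathsf{M}_n)$ and then reduce every $(t,m)$-pair to a single canonical pair. First, I would observe that each factor $(\mathsf{L}_n, \chi)$, being a cyclic group under its defining metric, admits as isometries both the translations $z \mapsto z + a$ and the reflection $z \mapsto -z$; consequently $\mathsf{M}_n$ carries the following three families of isometries. Coordinatewise translations $(z_i) \mapsto (z_i + a_i)$ for any $a \in \mathsf{M}_n$, coordinatewise sign changes $(z_i) \mapsto (\varepsilon_i z_i)$ for any $(\varepsilon_i) \in \{\pm 1\}^{n^n}$, and coordinate permutations $(z_i) \mapsto (z_{\sigma(i)})$ for $\sigma \in S_{n^n}$ are each isometries of $\mathsf{M}_n$.

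Next, given any $(t,m)$-pair $\{x,y\} \in \Theta_t^m$, I would bring it to a canonical form via these isometries in three steps. Translate by $-x$, so that $x$ is mapped to the origin $\mathbf{0}$ and $y$ is mapped to a vector $y-x$ whose support $S \subset \{1, \ldots, n^n\}$ has cardinality $n^m$, and whose value at each $i \in S$ lies at $\chi$-distance $2^t$ from $0$ in $\mathsf{L}_n$, hence equals either $2^t$ or $-2^t$. Then apply sign changes at precisely those indices in $S$ where the value is $-2^t$; the resulting vector equals $2^t$ on $S$ and $0$ off $S$. Finally, a coordinate permutation sending $S$ onto the fixed subset $\{1, \ldots, n^m\}$ produces a canonical pair $\{\mathbf{0}, y^{\star}\}$ that depends only on $t$ and $m$.

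Since every $(t,m)$-pair can be sent to this common canonical pair by such a composition of isometries, any two $(t,m)$-pairs are related by the isometry obtained from one reduction composed with the inverse of the other. The only routine verifications are that translations and sign changes really do preserve $\chi$, which is transparent from its cyclic definition, and that the unordered nature of the pair is automatically respected because the canonical target $\{\mathbf{0}, y^{\star}\}$ is itself unordered. I expect the main hurdle to be merely identifying this subgroup of $\mathrm{Isom}(\mathsf{M}_n)$; once it is in hand, the argument is essentially bookkeeping. The hypothesis $m \geq 1$ appears to play no structural role in the strategy above, so it is presumably included only to match the range in which the lemma is later invoked.
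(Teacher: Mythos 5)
Your proposal is correct and uses essentially the same approach as the paper: the same stock of isometries of $\mathsf{M}_n$ (coordinate permutations together with coordinatewise rotations and reflections of the cyclic factors $\mathsf{L}_n$), with the only difference being that you reduce every $(t,m)$-pair to a single canonical pair $\{\mathbf{0},y^{\star}\}$ rather than matching two given pairs directly as the paper does. Your packaging is, if anything, slightly cleaner, since translating by $-x$ treats all coordinates at once and the reflection $z\mapsto -z$ needs no appeal to the evenness of $n^{n}$; and your side remark is right that $m\geq 1$ is not really used, being automatic from the definition of a $(t,m)$-pair.
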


\begin{proof}
First note that any rearrangement of coordinates is an isometry of $\mathsf{M}_{n}$. Given $(t,m)$-pairs
$\{x,y\}$ and $\{u,v\}$, first rearrange the coordinates of $x$ and $y$ so that $\{i:x_i=y_i\}=\{i:u_i=v_i\}$.
This can be done because of condition (2) in the definition of a $(t,m)$-pair. One can then rotate the coordinates
of $x$ and $y$ indexed by $\{i:x_i=y_i\}$ so that $x_i=y_i\Rightarrow x_i=u_i$. Given $i$ such that $x_i\ne y_i$,
flip $x_i$ and $y_i$ as necessary to ensure that $(x_i,y_i)$ and $(u_i,v_i)$ have the same orientation in
$\mathsf{L}_{n}$, and then rotate the $i$th coordinate in $x_i$ and $y_i$ so that $x_i=u_i$, $y_i=v_i$.
This reflection can be carried out because $n^n$ is even. Clearly the operations performed above constitute
an isometry of $\mathsf{M}_n$.
\end{proof}

\noindent Let $N_{(t,m)}$ be the number of $(t,m)$-pairs in $\mathsf{M}_n$. It is clear that there is a natural
number $L_{(t,m)}$ such that any $(t,m)$-pair in $\mathsf{M}_n$ is a connecting line in $L_{(t,m)}$ different
ways in $(t,m-1)$-simplices. Similarly, there exists a natural number $K_{(t,m)}$ such that any $(t,m)$-pair
is an edge in $K_{(t,m)}$ different ways in $(t-1,m)$-simplices.

Let $S_{(t,m)}$ be the number of distinct $(t,m)$-simplices. Any $(t+1,m)$-pair may be embedded as an edge
within a fixed $(t,m)$-simplex in $n(n-1)$ different ways, so $S_{(t,m)} n(n-1)=N_{(t+1,m)}K_{(t+1,m)}$.
Similarly, any $(t,m+1)$-pair may be embedded as a connecting line within a fixed $(m,t)$-simplex in $n^2$
different ways, so $S_{(t,m)} n^2=N_{(t,m+1)}L_{(t,m+1)}$. This yields the following expression:
\begin{align}\label{zwie}
\frac{L_{(t,m+1)}}{K_{(t+1,m)}}&=\left(\frac{n}{n-1}\right)\frac{N_{(t+1,m)}}{N_{(t,m+1)}},
\end{align}
which holds whenever $0<m<n-1$ and $-n < t <n-1$. It is worth noting that $N_{(t,m)}$ depends on $n$ as well
as $t$ and $m$. The context will make the value $n$ clear. To simplify later expressions we will denote $f(x)$ by
$x^f$ and $d(x,y)^{p}$ by $d^{p}(x,y)$ for the rest of this paper.

\begin{thm}\label{theorem1}
Let $(B,d)$ be a metric space of finite generalized roundness $\mathfrak{q}(B)$ and let $f:\mathsf{M}_{n+2}\to B$ be a function.
Provided $0 \leq p \leq \mathfrak{q}(B)$ and $-n \leq t \leq 0$, the following inequality holds:
\begin{eqnarray*}
\frac{1}{N_{(t,n+1)}}\sum_{\{x,y\}\in \Theta_t^{n+1}}d^p (x^f,y^f) & \geq &
\left(1-\frac{1}{n+2}\right)^n\frac{1}{N_{(t+n,1)}}\sum_{\{u,v\}\in \Theta_{t+n}^1}d^p (u^f,v^f).
\end{eqnarray*}
In the event that $\mathfrak{q}(B) = \infty$, the above inequality holds for all $p \geq 0$ and $-n \leq t \leq 0$.
\end{thm}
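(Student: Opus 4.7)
The plan is to reduce the statement to a single inductive step that trades a $(t_0+1, m_0)$-pair average for a $(t_0, m_0+1)$-pair average at the cost of a multiplicative factor $1 - \frac{1}{n+2}$, and then to iterate this step exactly $n$ times from the base index $(t+n, 1)$ up to the target $(t, n+1)$. The only inputs are the generalized roundness inequality (\ref{ONE}), the transitivity supplied by Lemma \ref{lemma2}, and the combinatorial identity (\ref{zwie}).

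For the inductive step, I fix $0 \leq p \leq \mathfrak{q}(B)$ and apply (\ref{ONE}) to the images under $f$ of an arbitrary $(t_0, m_0)$-simplex $(\mathscr{X}, \mathscr{Y})$ in $\mathsf{M}_{n+2}$, then sum the resulting inequality over all such simplices. By Lemma \ref{lemma2}, each $(t_0+1, m_0)$-pair occurs as an edge in exactly $K_{(t_0+1, m_0)}$ simplices and each $(t_0, m_0+1)$-pair occurs as a connecting line in exactly $L_{(t_0, m_0+1)}$ simplices, so the aggregated inequality collapses to
\begin{equation*}
K_{(t_0+1, m_0)} \sum_{\{x,y\} \in \Theta_{t_0+1}^{m_0}} d^p(x^f, y^f) \;\leq\; L_{(t_0, m_0+1)} \sum_{\{u,v\} \in \Theta_{t_0}^{m_0+1}} d^p(u^f, v^f).
\end{equation*}
Dividing both sides by $K_{(t_0+1, m_0)} N_{(t_0+1, m_0)}$ and substituting (\ref{zwie}) (read with $n$ replaced by $n+2$) to rewrite the ratio $L_{(t_0, m_0+1)} / K_{(t_0+1, m_0)}$ yields, after the $N$'s cancel, the single-step recursion
\begin{equation*}
\frac{1}{N_{(t_0, m_0+1)}} \sum_{\{u,v\} \in \Theta_{t_0}^{m_0+1}} d^p(u^f, v^f) \;\geq\; \Bigl(1 - \frac{1}{n+2}\Bigr) \frac{1}{N_{(t_0+1, m_0)}} \sum_{\{x,y\} \in \Theta_{t_0+1}^{m_0}} d^p(x^f, y^f).
\end{equation*}

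Chaining this inequality along the indices $(t_0, m_0) = (t + n - k, k)$ for $k = 1, 2, \ldots, n$ telescopes into the desired bound, since the right-hand side at step $k+1$ coincides with the left-hand side at step $k$. The hypothesis $-n \leq t \leq 0$ guarantees that every intermediate index pair $(t+n-k, k)$ lies within the ranges $0 < m < n+1$ and $|t| \leq n$ for which Lemma \ref{lemma1} supplies the required simplex and (\ref{zwie}) is valid. The case $\mathfrak{q}(B) = \infty$ requires no modification, since (\ref{ONE}) then holds for every $p \geq 0$ and the same iteration runs unchanged. The main technical point to watch is the collapsing in the summation step: that the double sum over simplices and edges (respectively, connecting lines) genuinely produces the uniform multiplicities $K$ and $L$. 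This is precisely where the transitivity from Lemma \ref{lemma2} is indispensable; without it, the multiplicities could depend on the specific pair chosen and the recursion would break down.
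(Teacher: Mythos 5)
Your proposal is correct and follows essentially the same route as the paper's proof: apply the generalized roundness inequality to the image of each $(t_0,m_0)$-simplex, sum over all such simplices so that the uniform multiplicities $K_{(t_0+1,m_0)}$ and $L_{(t_0,m_0+1)}$ appear, rewrite via (\ref{zwie}) read in $\mathsf{M}_{n+2}$, and iterate the resulting one-step recursion $n$ times from $(t+n,1)$ up to $(t,n+1)$. The only point the paper makes explicit that you gloss over is that invoking (\ref{ONE}) for \emph{every} $0 \leq p \leq \mathfrak{q}(B)$ (rather than only for $p$ in the set whose supremum defines $\mathfrak{q}(B)$) requires knowing the admissible exponents form an interval, which the paper justifies by citing \cite[Corollary 2.5]{Ltw}.
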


\begin{proof} Consider any non negative real number $p$ that does not exceed $\mathfrak{q}(B)$.
We first note that if $0 < m <n+1$ and if $(\mathscr{X},\mathscr{Y})$ is any $(t,m)$-simplex in $\mathsf{M}_{n+2}$, then 
\[
\sum_{i,j} d^p(x_i^f,y_j^f) \geq \sum_{i <j} d^p(x_i^f,x_j^f)+\sum_{i <j} d^p(y_i^f,y_j^f)
\]
because (\ref{ONE}) holds for any exponent that does not exceed $\mathfrak{q}(B)$ by \cite[Corollary 2.5]{Ltw}.

By adding up all such inequalities for a fixed $m$, $0 < m<n+1$, we obtain
\[
L_{(t,m+1)} \sum_{\{x,y\}\in \Theta_t^{m+1}} d^p(x^f,y^f) \geq K_{(t+1,m)} \sum_{\{u,v\}\in \Theta_{t+1}^m}d^p(u^f,v^f).
\]
Applying (\ref{zwie}), we see that
\[
\frac{1}{N_{(t,m+1)}}\sum_{\{x,y\}\in \Theta_t^{m+1}}d^p(x^f,y^f) \geq
\left(1-\frac{1}{n+2}\right)\frac{1}{N_{(t+1,m)}}\sum_{\{u,v\}\in \Theta_{t+1}^m}d^p(u^f,v^f).
\]
Start with $m=n$, and iterate this inequality $n$ times to arrive at the desired result.
The case where $\mathfrak{q}(B) = \infty$ follows from \cite[Corollary 2.5]{Ltw} in the same way.
\end{proof}

\noindent We conclude this section by describing the locally finite metric space $(\mathfrak{Z},\zeta)$ that will
be the focus of the subsequent sections. Theorems \ref{theorem2} and \ref{theorem3} determine that this metric space
may not be embedded uniformly or coarsely into any metric space of non zero generalized roundness.

\begin{defn}\label{enflowed}
Let $\mathfrak{Z}=\coprod_{n\in 2\mathbb{N}}\mathsf{M}_n$, with metric $\zeta$ such that $\zeta$ restricted
to any $\mathsf{M}_n$ gives the original metric on $\mathsf{M}_n$, and such that $\zeta(x,y)=4^{m+n}$ if
$x\in \mathsf{M}_m$ and $y\in \mathsf{M}_n$.
The metric space $(\mathfrak{Z}, \zeta)$ is locally finite by the nature
of its definition.
\end{defn}

\section{The Obstruction to Coarse Embeddings}

\begin{thm}\label{theorem2}
The locally finite metric space $(\mathfrak{Z},\zeta)$ does not coarsely embed into any metric
space of non zero generalized roundness.
\end{thm}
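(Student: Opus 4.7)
The plan is to derive a contradiction by applying Theorem \ref{theorem1} to the restriction of a hypothetical coarse embedding to each finite slab $\mathsf{M}_{n+2} \subset \mathfrak{Z}$. Crucially, a single pair of control functions $\rho_1, \rho_2$ must work simultaneously for every $\mathsf{M}_n$, and it is this uniformity --- not any finite individual instance --- that Theorem \ref{theorem1} defeats.

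Concretely, I would suppose for contradiction that $f : (\mathfrak{Z}, \zeta) \to (B, \varsigma)$ is a coarse embedding with $\mathfrak{q}(B) > 0$, witnessed by non-decreasing $\rho_1, \rho_2 : [0, \infty) \to [0, \infty)$ as in Definition \ref{coarse}. Since $\zeta$ restricted to each $\mathsf{M}_n$ coincides with the original metric $d_\infty$, the same $\rho_1, \rho_2$ control every restriction $f|_{\mathsf{M}_n}$. Fix once and for all some $0 < p \leq \mathfrak{q}(B)$, and for each even $n \geq 2$ apply Theorem \ref{theorem1} to $f|_{\mathsf{M}_{n+2}}$ with $t = 0$, which lies in the admissible range $-n \leq t \leq 0$.

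The key observation is that every $(t,m)$-pair in $\mathsf{M}_{n+2}$ satisfies $d_\infty = 2^t$, since $\chi(x_i, y_i) = 2^t$ on the indices of disagreement. Hence every pair in $\Theta_0^{n+1}$ contributes at most $\rho_2(1)^p$ to the left-hand side of the inequality in Theorem \ref{theorem1}, and every pair in $\Theta_n^1$ contributes at least $\rho_1(2^n)^p$ to the right-hand side. Both pair classes are nonempty by Lemma \ref{lemma1} (take the $(0,n+1)$-simplex directly for the former, and the edges of an $(n-1,1)$-simplex for the latter). Substituting these uniform bounds into Theorem \ref{theorem1} yields
\[
\rho_2(1)^p \;\geq\; \left(1 - \frac{1}{n+2}\right)^{\!n} \rho_1(2^n)^p.
\]
Letting $n \to \infty$, the prefactor converges to $e^{-1}$, while $\rho_1(2^n) \to \infty$ by condition (2) of Definition \ref{coarse}. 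Since $p > 0$, this forces $\rho_2(1) = \infty$, contradicting its finiteness.

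The main obstacle is really just bookkeeping: verifying that $d_\infty = 2^t$ for every $(t,m)$-pair, checking nonemptiness of $\Theta_0^{n+1}$ and $\Theta_n^1$, and noting that a coarse embedding restricts to its subspaces with the same moduli. No argument involving the inter-slab metric of $\zeta$ is required, because the obstruction lives entirely inside the individual $\mathsf{M}_{n+2}$'s and is activated by the fact that $\rho_1$ and $\rho_2$ cannot depend on $n$. The case $\mathfrak{q}(B) = \infty$ is handled identically by choosing any $p > 0$.
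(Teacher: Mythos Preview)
Your proof is correct and follows essentially the same route as the paper's: apply Theorem \ref{theorem1} with $t=0$ inside $\mathsf{M}_{n+2}$, bound the left-hand average above by $\rho_2(1)^p$ and the right-hand average below by $\rho_1(2^n)^p$, and let $n\to\infty$ (the paper packages this as a choice of a single sufficiently large $n$, using $(1-\tfrac{1}{n+2})^n\geq e^{-1}$ directly). One small slip: $(0,n+1)$-pairs arise as the \emph{connecting lines} of a $(0,n)$-simplex in $\mathsf{M}_{n+2}$, not from a ``$(0,n+1)$-simplex'', but nonemptiness of $\Theta_0^{n+1}$ follows from Lemma \ref{lemma1} either way.
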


\begin{proof}
We first suppose that there exists a metric space $(B,d)$ with $\mathfrak{q}(B)=p > 0$ and a coarse
embedding $f:\mathfrak{Z}\to B$. By the definition of a coarse embedding, there exist non-decreasing
functions $\rho_1,\rho_2:[0,\infty)\to[0,\infty)$ such that
\[
\rho_1(\zeta(x,y))\leq d(x^f,y^f) \leq \rho_2 (\zeta(x,y))
\]
for all $x,y\in \mathfrak{Z}$, and such that $\lim_{r\to \infty} \rho_1 (r)=\infty$.
Then select a positive even integer $n$ together with a real number $\alpha$ such that
$\rho_1 (2^n)\geq \alpha \rho_2 (1)$, and $\alpha^p e^{-1} >1$. Now consider $\mathsf{M}_{n+2}\subset \mathfrak{Z}$,
and set $t=0$. By Theorem \ref{theorem1},
\begin{align} \notag
\frac{1}{N_{(0,n+1)}}\sum_{\{x,y\}\in \Theta_0^{n+1}}d^p(x^f,y^f)& \geq
\left(1-\frac{1}{n+2}\right)^n\frac{1}{N_{(n,1)}}\sum_{\{u,v\}\in \Theta_n^1}d^p(u^f,v^f)\\ \notag
&\geq e^{-1}\rho_1^p (2^n) \\ \notag &\geq\alpha^p e^{-1} \rho_2^p (1).
\end{align}

On the other hand, we have that

\[
\frac{1}{N_{(0,n+1)}}\sum_{\{x,y\}\in \Theta_0^{n+1}}d^p(x^f,y^f)\leq \rho_2^p (1),
\]
which clearly contradicts $\alpha^pe^{-1}>1$. In the case where $\mathfrak{q}(B) = \infty$, we may
argue as above by using any positive real number $p$ in place of $\mathfrak{q}(B)$. This is due to
the second assertion of Theorem \ref{theorem1}.
\end{proof}

\noindent It is worth noting that if one is only interested in locally finite metric spaces that do not
embed coarsely into Hilbert space, then there is a simpler construction than that of $(\mathfrak{Z},\zeta)$.
Let $(X_{n})$ be an infinite sequence of finite, connected, $k$-regular graphs which is an expanding family,
and let $X$ be the disjoint union of the $X_{n}$'s endowed with a metric that induces the original metric
on each $X_{n}$, then $X$ does not coarsely embed into any Hilbert space. This example is due to Gromov \cite{Mgr}.

In view of the emphasis that is often placed on locally finite metric spaces in coarse geometry it makes
sense to isolate the following natural definition.

\begin{defn}
A metric space $(X,d)$ is said to be a \textit{universal coarse embedding space} if every locally finite
metric space coarsely embeds into $X$.
\end{defn}

\noindent Examples of universal coarse embedding spaces include the Banach space $C[0,1]$ and the Urysohn space $\mathbb{U}$.
We note that background information on the Urysohn space may be found in Pestov \cite[Chapter 5]{Pes}. Theorem \ref{theorem2}
automatically implies that no universal coarse embedding space can have non zero generalized roundness. This
mirrors the corresponding result for uniform embeddings \cite[Theorem 2.1]{Enf}.

\begin{cor}\label{uces}
No universal coarse embedding space has non zero generalized roundness. In particular,
metric subspaces of $L_{p}$-spaces, where $0 < p \leq 2$, and ultrametric spaces are not
universal coarse embedding spaces.
\end{cor}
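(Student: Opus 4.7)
The plan is to derive Corollary \ref{uces} as an essentially immediate consequence of Theorem \ref{theorem2}, together with the known generalized roundness of the classical examples. The argument is purely formal: the locally finite space $(\mathfrak{Z}, \zeta)$ serves as a universal obstruction, so any space into which every locally finite metric space coarsely embeds must absorb $(\mathfrak{Z}, \zeta)$, and hence cannot have positive generalized roundness.

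More precisely, I would first suppose for contradiction that $(X, d)$ is a universal coarse embedding space with $\mathfrak{q}(X) > 0$ (including possibly $\mathfrak{q}(X) = \infty$). Since $(\mathfrak{Z}, \zeta)$ is locally finite by Definition \ref{enflowed}, the universality assumption furnishes a coarse embedding $f : \mathfrak{Z} \to X$. But Theorem \ref{theorem2} states precisely that no such coarse embedding exists when $\mathfrak{q}(X) \neq 0$, which is the desired contradiction. Thus $\mathfrak{q}(X) = 0$ for every universal coarse embedding space $X$.

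For the second assertion, I would invoke the standard facts (recorded in the closing paragraph of the introduction and surveyed in \cite{Paw}) that every metric subspace of an $L_p$-space with $0 < p \leq 2$ has generalized roundness at least $p > 0$, and that every ultrametric space has infinite generalized roundness. Consequently neither class of spaces can be universal coarse embedding spaces, by the first part of the corollary.

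There is no real obstacle here: the content is packed entirely into Theorem \ref{theorem2}, and the corollary is extracted by composing the definition of a universal coarse embedding space with that theorem. The only thing to be careful about is to phrase the argument so that it covers both the finite and infinite generalized roundness cases uniformly, which is already handled by the second assertion of Theorem \ref{theorem1} and the corresponding clause in the proof of Theorem \ref{theorem2}.
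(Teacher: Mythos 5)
Your proposal is correct and matches the paper's argument: the corollary is deduced immediately from Theorem \ref{theorem2}, since a universal coarse embedding space would have to receive a coarse embedding of the locally finite space $(\mathfrak{Z},\zeta)$, and the second assertion follows from the standard positive (or infinite) generalized roundness of subspaces of $L_p$ for $0<p\leq 2$ and of ultrametric spaces.
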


\noindent We conclude this section by noting that there is an interesting way to adapt Theorem \ref{theorem2}
to Cayley graphs of certain abelian groups. We provide one such example.

Recall that if $G$ is an abelian group, then for each generating set $\Sigma$, one can consider the Cayley graph
$\Gamma = \mbox{Cay}\,(G,\Sigma)$ of $G$ with respect to $\Sigma$, endowed with the usual graph metric.
In most cases, $\mathfrak{q}(\Gamma)\leq 1$. Simply select $g,h\in \Sigma$ such that $g+h,g-h\notin \Sigma$,
and consider the double-$2$-simplex $(\mathscr{X},\mathscr{Y})$ with $\mathscr{X}=\{0,g\}$, $\mathscr{Y}=\{h,g+h\}$.
It is readily checked that the generalized roundness inequality for $(\mathscr{X},\mathscr{Y})$ holds only if $p\leq 1$.

Non zero distances in Cayley graphs are all at least one, so it is not possible for $\mathsf{M}_{n}$ to be
isometric to a subset of any Cayley graph. However, if we define $\mathsf{M}_n^{\ast}$ to be the subset of $\mathsf{M}_n$
consisting of points with integer coordinates, then it is apparent that the proofs of Theorems \ref{theorem1} and \ref{theorem2}
can be adapted to any space that contains isometric copies of $\mathsf{M}_n^{\ast}$ for arbitrarily large $n$. This leads to the
following theorem.

\begin{thm}\label{theorem4}
There exists a generating set $\Sigma$ for $\mathbb{Z}_\omega=\bigoplus_{\aleph_0}\mathbb{Z}$ such 
that the Cayley graph of $\mathbb{Z}_\omega$ with respect to $\Sigma$ contains isometric copies of
$\mathsf{M}_n^{\ast}$ for arbitrarily large $n$. In particular, the Cayley graph $\mbox{Cay}\, (\mathbb{Z}_{\omega},\Sigma)$
does not coarsely embed into any metric space of non zero generalized roundness.
\end{thm}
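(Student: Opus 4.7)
The plan is to construct a generating set $\Sigma$ of $\mathbb{Z}_\omega$ for which $\mbox{Cay}\,(\mathbb{Z}_\omega, \Sigma)$ contains an isometric copy of $\mathsf{M}_n^{\ast}$ for every even $n \geq 2$; the non-embeddability assertion then follows at once from the remark preceding the theorem, which explains that the arguments of Theorems \ref{theorem1} and \ref{theorem2} use nothing about the host space beyond the presence of isometric copies of $\mathsf{M}_n^{\ast}$ for arbitrarily large $n$.

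To this end, I would view $\mathbb{Z}_\omega$ as an internal direct sum $\bigoplus_{n \in 2\mathbb{N}} A_n$ with $A_n \cong \mathbb{Z}^{n^{2n}/2}$, and partition the $n^{2n}/2$ standard generators of $A_n$ into $n^n$ sub-blocks $B_n^{(1)},\ldots,B_n^{(n^n)}$, each of size $n^n/2$. Take $\Sigma_n \subset A_n$ to be the symmetric set of all nonzero vectors whose restriction to every $B_n^{(j)}$ has $\ell_1$-norm at most $1$, and set $\Sigma = \bigsqcup_n \Sigma_n$. An elementary argument then shows that the $\Sigma$-word length of $v \in A_n$ equals $\max_j \|v^{(j)}\|_1$, where $v^{(j)}$ denotes the restriction of $v$ to $B_n^{(j)}$: elements of $\Sigma_m$ with $m \neq n$ are supported off of $A_n$ and so cannot contribute; within $\Sigma_n$ the lower bound is forced because one generator changes each sub-block by at most one in $\ell_1$; and the matching upper bound is realised by a greedy step that simultaneously nudges every still-nonzero sub-block by one unit towards its target.

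To embed $\mathsf{M}_n^{\ast}$ isometrically into $A_n$ equipped with this metric, I would use the standard Hamming-cube embedding of an even cycle. Since $n^n$ is even, the map $\psi : \mathbb{Z}/n^n\mathbb{Z} \to \{0,1\}^{n^n/2}$ whose $k$-th coordinate is the indicator that $i$ lies in the length-$n^n/2$ arc beginning at $k$ satisfies $\|\psi(i) - \psi(j)\|_1 = \chi(i,j)$; this is a short case check for $|i-j| \leq n^n/2$. Applying $\psi$ coordinatewise, with the image of the $j$-th coordinate of $x$ placed into the sub-block $B_n^{(j)}$, defines $\Phi_n : \mathsf{M}_n^{\ast} \to A_n$. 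Combining the two computations,
\[
d_\Gamma(\Phi_n(x), \Phi_n(y)) \;=\; \max_j \|\psi(x_j) - \psi(y_j)\|_1 \;=\; \max_j \chi(x_j,y_j) \;=\; d_\infty(x,y),
\]
so $\Phi_n$ is isometric, and $\mbox{Cay}\,(\mathbb{Z}_\omega, \Sigma)$ contains $\mathsf{M}_n^{\ast}$ isometrically for every even $n$.

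The main obstacle is really the word-length calculation for $\Sigma$: one must verify carefully that no shortcut is gained by using generators from different $\Sigma_m$'s, and that the sup-of-$\ell_1$ formula is sharp within each $\Sigma_n$. Once this is in place, the second assertion of the theorem is immediate from the adaptation of Theorems \ref{theorem1} and \ref{theorem2} indicated in the paragraph preceding the statement.
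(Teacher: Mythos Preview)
Your construction is correct, but it proceeds by a genuinely different route from the paper's. The paper takes $A_n=\mathbb{Z}^{n^n}$ and lets $\Sigma$ consist of all finitely supported $\{0,\pm 1\}$-vectors together with, for each $n$, the vectors in $A_n$ whose entries lie in $\{0,\pm(n^n-1)\}$. The point is that on a single $\mathbb{Z}$-factor the generators $\pm 1,\pm(n^n-1)$ already realise the cyclic metric on $\{1,\ldots,n^n\}$, and allowing all sign patterns simultaneously across the $n^n$ factors turns this into the supremum metric; hence $\mathsf{M}_n^\ast$ sits isometrically inside $A_n$ with no auxiliary embedding needed. Your approach instead inflates each $\mathsf{L}_n^\ast$-coordinate into a block of $n^n/2$ integer coordinates, invokes the classical hypercube embedding of the even cycle to convert cyclic distance into $\ell_1$-distance on that block, and chooses $\Sigma_n$ so that word length becomes $\max_j\|\,\cdot\,\|_1$ over blocks. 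The paper's construction is more economical (smaller $A_n$, simpler $\Sigma$) and avoids the extra embedding step; your version has the compensating virtue that the word-length formula is completely transparent and its proof is spelled out, whereas the paper leaves the verification that the restricted graph metric equals $d_\infty$ to the reader. Both routes reduce the second assertion to the same adaptation of Theorems~\ref{theorem1} and~\ref{theorem2}.
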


\begin{proof}
We may rewrite $\mathbb{Z}_\omega$ as $\bigoplus_{n=1}^\infty A_n$, where $A_n=\mathbb{Z}^{n^n}$.
For each $n$, let $\Sigma_n$ be the set of vectors contained in $A_n$ whose coordinates are
either $0$ or $\pm (n^n-1)$. Let $\Sigma$ be $\bigcup_{n=1}^\infty \Sigma_n$ together with
all maps $\omega\to \{0,\pm 1\}$ of finite support. It is easy to see that the restriction
of the graph metric on $\mbox{Cay}\,(\mathbb{Z}_\omega,\Sigma)$ to $\mbox{Cay}\,(A_n,A_n\cap \Sigma)$
yields the usual metric on $\mathsf{M}_n^*$. The proof of the second statement is a straightforward
application of Theorem \ref{theorem2}.
\end{proof}

\begin{rem}
In the case of obstructing coarse embeddings into Hilbert space, it is worth noting that
the second statement of Theorem \ref{theorem4} is similar in spirit to an example of Nowak
\cite[Remark 5 (A)]{Now}. A feature of our approach is that it is more general and it uses
a less sophisticated machinery.
\end{rem}

\section{The Obstruction to Uniform Embeddings}

The metric space $(\mathfrak{Z},\zeta)$ constructed in Definition \ref{enflowed} has enough fine structure
that it also obstructs uniform embeddings into metric spaces of non zero generalized roundness. Once again,
the obstruction is provided by Theorem \ref{theorem1}.

\begin{thm}\label{theorem3}
The locally finite metric space $(\mathfrak{Z},\zeta)$ does not uniformly embed into
any metric space of non zero generalized roundness.
\end{thm}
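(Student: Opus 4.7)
The plan is to parallel the proof of Theorem \ref{theorem2}, invoking Theorem \ref{theorem1} at $t = -n$ rather than $t = 0$, so that both sides of the recursive inequality refer to distances that are directly controlled by uniform continuity. With $t = -n$, the $(-n,n+1)$-pairs on the left sit at the vanishing $\zeta$-distance $2^{-n}$ and the $(0,1)$-pairs on the right sit at the fixed $\zeta$-distance $1$; the former will be handled by uniform continuity of $f$, and the latter by uniform continuity of $f^{-1}$.

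Suppose, for contradiction, that $(B,d)$ has $\mathfrak{q}(B) = p > 0$ and $f : \mathfrak{Z} \to B$ is a uniform embedding. The first step is to extract two gauges. Uniform continuity of $f^{-1}$, applied at target $1$, produces $\delta > 0$ such that $\zeta(u,v) \geq 1$ forces $d(u^f, v^f) \geq \delta$. Then choose $\varepsilon > 0$ with $\varepsilon^p < e^{-1}\delta^p$, and let $\delta' > 0$ be the gauge from uniform continuity of $f$ attached to $\varepsilon$, so that $\zeta(x,y) < \delta'$ forces $d(x^f, y^f) < \varepsilon$. Finally, select an even integer $n$ with $2^{-n} < \delta'$.

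Applying Theorem \ref{theorem1} inside $\mathsf{M}_{n+2}$ with $t = -n$ now yields the desired contradiction. Each $(-n, n+1)$-pair has $d_\infty$-distance $2^{-n} < \delta'$, so the left-hand side is at most $\varepsilon^p$; each $(0,1)$-pair has $d_\infty$-distance $1$, so the right-hand side is at least $(1 - \tfrac{1}{n+2})^n \delta^p > e^{-1}\delta^p$, the last inequality holding for every $n \geq 1$. Combined, these force $\varepsilon^p \geq e^{-1}\delta^p$, contradicting the choice of $\varepsilon$. The case $\mathfrak{q}(B) = \infty$ is handled identically, using any fixed positive real $p$ via the second clause of Theorem \ref{theorem1}.

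The only point of bookkeeping worth flagging is the verification that the pairs and intermediate simplices demanded by the recursion actually exist in $\mathsf{M}_{n+2}$: since $n$ is even, $(n+2)^{n+2}$ is divisible by $2^{n+2}$, so $2^{-n}$ is an integer multiple of the step size $(n+2)^{-(n+2)}$, and Lemma \ref{lemma1} supplies the requisite $(t,m)$-simplices for $t \in \{-n,\ldots,-1\}$ and $m \in \{1,\ldots,n\}$, each satisfying $|t| < n+2$ and $0 < m < n+2$. Beyond this routine check, no real obstacle arises — the structural content has already been absorbed into Theorem \ref{theorem1}, and the uniform-embedding case is simply its mirror image across the parameter $t$.
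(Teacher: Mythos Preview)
Your proof is correct and follows essentially the same route as the paper: apply Theorem \ref{theorem1} in $\mathsf{M}_{n+2}$ with $t=-n$, bound the left side using uniform continuity of $f$ (since $(-n,n+1)$-pairs have $\zeta$-distance $2^{-n}$) and the right side using uniform continuity of $f^{-1}$ (since $(0,1)$-pairs have $\zeta$-distance $1$), and derive a contradiction. The only cosmetic difference is that the paper passes to $\sup$ and $\inf$ and lets $n\to\infty$, whereas you fix $\varepsilon,\delta,\delta'$ in advance and then select a single even $n$; your added divisibility remark making Lemma \ref{lemma1} applicable is a point the paper leaves implicit.
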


\begin{proof} We first
suppose that there exists a metric space $(B,d)$ with $\mathfrak{q}(B)=p > 0$ and a uniform embedding $f$
of $\mathfrak{Z}$ into $B$. If we set $t=-n$ and consider $\mathsf{M}_{n+2}\subset\mathfrak{Z}$,
then it follows from Theorem \ref{theorem1} that 
\[
\sup_{\{x,y\}\in \Theta_{-n}^{n+1}}d^p(x^f,y^f)\geq e^{-1}\inf_{\{u,v\}\in \Theta_0^1}d^p(u^f,v^f),
\]
and so
\begin{eqnarray}\label{TWO}
\sup_{\{x,y\}\in \Theta_{-n}^{n+1}} d(x^f,y^f)\geq \sqrt[p]{e^{-1}}\inf_{\{u,v\}\in \Theta_0^1} d(u^f,v^f).
\end{eqnarray}
However, $\zeta(u,v) \geq 1$ for all $\{ u,v \} \in \Theta_{0}^{1}$ and $f$ has a uniformly continuous inverse,
so there must exist an $\varepsilon>0$ such that $\inf_{\{u,v\}\in \Theta_0^1} d(u^f,v^f) \geq \varepsilon$. The
net effect is that the right side of (\ref{TWO}) must be bounded away from zero, which is impossible.
This is because $\zeta(x,y) = 2^{-n}$ for all $\{ x,y \} \in \Theta_{-n}^{n+1}$ and $f$ is uniformly continuous,
providing a contradiction in the limit as $n \rightarrow \infty$. In the case where $\mathfrak{q}(B) = \infty$, we may
argue as above by using any positive real number $p$ in place of $\mathfrak{q}(B)$. This is due to
the second assertion of Theorem \ref{theorem1}.
\end{proof}

\begin{rem}\label{noboots}
The metric space $(\mathfrak{Z},\zeta)$ whose properties are examined in Theorems \ref{theorem2} and
\ref{theorem3} has generalized roundness $p = 0$ together with ample circular symmetry.
It is not possible to adapt this construction to positive values of $p$ in any way whatsoever.
No form of Theorem \ref{theorem2} or Theorem \ref{theorem3} exists for metric spaces of generalized roundness $p>0$.
In other words, for any metric space $(X,d)$ of generalized roundness $p>0$, there exists a metric space of generalized
roundness $q>p$ that is both uniformly and coarsely equivalent to $(X,d)$.
To see this we simply consider the metric transform $\rho (x,y)=d(x,y)^{p/q}$. It is easy to see
that $(X,\rho)$ has generalized roundness $q$, and that the identity map on $X$ is both a uniform
and a coarse equivalence between $(X,d)$ and $(X,\rho)$. It is also worth noting that the uniform continuity of
the inverse of $f$ in the proof of Theorem \ref{theorem3} is absolutely necessary. We illustrate this by showing
that every locally finite metric space admits a Lipschitz injection into $\ell_0$ and a uniformly continuous
injection into $\ell_p$ for each $p>0$.
\end{rem}

\begin{thm}\label{theorem5}
Every locally finite metric space may be Lipschitz injected into $\ell_0$.
\end{thm}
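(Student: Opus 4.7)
The plan is a one-line construction via a Kuratowski-type embedding. Local finiteness forces $Z$ to be countable, so fix any enumeration $Z = \{z_1, z_2, \ldots\}$ and define
\[
f : Z \to \ell_0, \qquad f(z) = (d(z,z_1), d(z,z_2), \ldots).
\]
I would view $\ell_0$ in the standard way as the $F$-space of all real sequences under the bounded, translation invariant metric $d_0(x,y) = \sum_{i\geq 1} 2^{-i}\, |x_i - y_i|/(1 + |x_i - y_i|)$; each coordinate contributes at most $2^{-i}$, so $f(z)$ lies in $\ell_0$ automatically.

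Two things must be checked, in order. First, injectivity is essentially free: if $f(z)=f(w)$, then reading off the coordinate indexed by the unique $i$ with $z_i=z$ yields $d(w,z)=0$, and hence $w=z$. Second, the Lipschitz estimate (with constant $1$) follows by combining two soft ingredients: the reverse triangle inequality $|d(z,z_i)-d(w,z_i)| \leq d(z,w)$, which makes every coordinate $1$-Lipschitz in $z$, and the elementary bound $t/(1+t)\leq t$ for $t\geq 0$, which lets the $F$-metric ignore the saturating denominator. Summing the resulting estimates over $i$ collapses into the geometric series $\sum_{i\geq 1}2^{-i}\, d(z,w) = d(z,w)$.

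I do not anticipate a genuine obstacle here; the theorem is a soft consequence of having a bounded, weighted $F$-norm on the target. The only real decision is the convention for $\ell_0$, but any of the usual $F$-norms (the formula above, or $\sum 2^{-i}\min(|x_i-y_i|,1)$, or similar variants) is dominated coordinatewise by $2^{-i}\,|x_i-y_i|$, so the same distance embedding works uniformly. This flexibility is exactly what permits locally finite spaces with arbitrarily small minimum separation, such as $(\mathfrak{Z},\zeta)$ itself (where distances in $\mathsf{M}_n$ can be as small as $n^{-n}$), to still admit a Lipschitz injection into $\ell_0$; it also explains why no appeal to the deeper structure of the paper — local finiteness is the only hypothesis used — is required.
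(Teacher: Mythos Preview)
Your argument is correct: the Kuratowski map $z \mapsto (d(z,z_i))_{i\geq 1}$ is injective for the reason you give, and the chain
\[
d_0\bigl(f(z),f(w)\bigr)=\sum_{i\geq 1}2^{-i}\,\frac{|d(z,z_i)-d(w,z_i)|}{1+|d(z,z_i)-d(w,z_i)|}\leq \sum_{i\geq 1}2^{-i}\,d(z,w)=d(z,w)
\]
is sound. In fact you only use that $Z$ is countable, so you prove a slightly stronger statement than the paper's Theorem~\ref{theorem5}.

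The paper takes a genuinely different route. Rather than the distance embedding, it exploits the discreteness of a locally finite space through the function $x^{g}=\min\{d(x,y):y\neq x\}$: one chooses, for each $n$, an arbitrary injection $h_n$ of the set $\{x:x^g\geq 2^{1-n}\}$ into $(0,\infty)$, and sets $x^{f_n}=x^{h_n}$ on that set and $0$ otherwise. The point is that for any pair $x,y$ the first $\kappa(x,y)-1$ coordinates of $f(x)-f(y)$ vanish, where $2^{1-\kappa(x,y)}\leq\max\{x^g,y^g\}\leq d(x,y)$, so the $\ell_0$-metric is bounded by the tail $\sum_{n\geq\kappa(x,y)}2^{-n}=2^{1-\kappa(x,y)}\leq d(x,y)$.

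Your approach is shorter and more natural for $\ell_0$ itself. The paper's more elaborate construction, however, is what drives the subsequent results: by simply forcing $x^{h_n}\in(0,2^{-n/p})$ the same map lands in $\ell_p$ and yields Theorem~\ref{theorem6}, and the abstract version of the same idea (nested balls of shrinking diameter in an arbitrary target) gives Theorem~\ref{discrete}. The Kuratowski embedding does not adapt to these targets, since $(d(z,z_i))_i$ is typically unbounded and certainly not in $\ell_p$ for $p>0$. So your proof is the cleaner one for the stated theorem, while the paper's argument is engineered to generalize.
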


\begin{proof}
Let $(Z,d)$ be a locally finite metric space. We assume that $|Z| > 1$ to avoid a triviality. For each
$x \in Z$, let $x^{g}=\min\{d(x,y): x \not= y \}$. We may then define, for each $n \in \mathbb{N}$, the
(possibly empty) set $A_{n} = \{ x \in Z : 2^{1-n} > x^{g} \}$. Provided $Z \setminus A_{n} \not= \emptyset$,
we may choose an injection $h_{n} : Z \setminus A_{n} \rightarrow (0, \infty)$ because $Z \setminus A_{n}$
is at most countable. This allows us to define a map $f: Z \to \ell_0 : x \mapsto
(x^{f_{1}}, x^{f_{2}}, \ldots )$ by
\[
x^{f_{n}}=\left\{
     \begin{array}{ll}
       0 & \mbox{if } x \in A_{n}, \\
       x^{h_{n}} & \mbox{otherwise.}
     \end{array}
   \right.
\]
Notice that if $x \not= z$, then we may choose $n \in \mathbb{N}$ so that both $x,z \notin A_{n}$.
Then $x^{h_{n}} \not= z^{h_{n}}$ because $h_{n} : Z \setminus A_{n} \rightarrow (0, \infty)$ is an injection.
Thus $f(x) \not= f(y)$, and so it follows that $f: Z \to \ell_0$ is an injection.

For $x,y \in Z$, we now define $\kappa (x,y)= \min\{n\in\mathbb{N}:2^{1-n}\leq \max\{x^{g},y^{g}\}\}$.
Notice that, $\kappa(x,y) \leq \min \{ \kappa(x,x), \kappa(y,y) \}$ for all $x,y \in Z$.
Moreover, if $k = \kappa(x,x)$ and if $n<k$, then $x^{f_{n}}=0$. Thus, given any $x,y\in Z$, we see that
\begin{align} \notag
\|x^f-y^f\|_0&=\sum_{n=1}^\infty \frac{|x^{f_n}-y^{f_n}| \cdot 2^{-n}}{1+|x^{f_n}-y^{f_n}|} \\ \notag
&=\sum_{n=\kappa(x,y)}^\infty \frac{|x^{f_n}-y^{f_n}|\cdot 2^{-n}}{1+|x^{f_n}-y^{f_n}|} \\ \notag
&\leq \sum_{n=\kappa(x,y)}^\infty 2^{-n}\\ \notag
&=2^{1-\kappa(x,y)}\\ \notag
&\leq \max\{x^{g},y^{g}\}\\ \notag
&\leq d(x,y),
\end{align}
thereby establishing that $f$ is a Lipschitz injection of order $1$.
\end{proof}

\begin{thm}\label{theorem6} Let $p > 0$.
Every locally finite metric space admits a uniformly continuous injection into $\ell_p$.
\end{thm}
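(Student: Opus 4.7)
The plan is to adapt the construction of Theorem \ref{theorem5} in the cleanest possible way: rather than encoding $Z$ through a diagonal Fr\'{e}chet-like argument, I will place each point on its own coordinate axis in $\ell_p$, with magnitude governed by the quantity $x^g = \min\{d(x,y) : y \neq x\}$ already used in the previous proof. Because $(Z,d)$ is locally finite, every bounded subset is finite; consequently $Z$ is countable and $x^g > 0$ for every $x \in Z$.

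Assuming $|Z| > 1$ to avoid triviality, fix a bijection $h : Z \to \mathbb{N}$, write $\{e_n\}_{n \geq 1}$ for the standard basis of $\ell_p$, and define $f : Z \to \ell_p$ by
\[
f(x) \;=\; x^g \cdot e_{h(x)}.
\]
Since each $f(x)$ is supported on a single coordinate, $f(x) \in \ell_p$ trivially. Injectivity is immediate: if $x \neq y$ then $h(x) \neq h(y)$, and because the non-zero entries of $f(x)$ and $f(y)$ are strictly positive, $f(x) \neq f(y)$.

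For uniform continuity observe that, for $x \neq y$, the vector $f(x) - f(y)$ has exactly two non-zero coordinates, of absolute values $x^g$ and $y^g$. Hence when $p \geq 1$,
\[
\|f(x) - f(y)\|_p \;=\; \bigl((x^g)^p + (y^g)^p\bigr)^{1/p},
\]
while when $0 < p < 1$ the standard $\ell_p$-metric yields
\[
d_p(f(x),f(y)) \;=\; (x^g)^p + (y^g)^p.
\]
The crucial estimate is the trivial one: each of $x$ and $y$ is an admissible witness in the other's definition of $g$, so $x^g \leq d(x,y)$ and $y^g \leq d(x,y)$. This produces the Lipschitz bound $\|f(x) - f(y)\|_p \leq 2^{1/p} d(x,y)$ for $p \geq 1$ and the H\"{o}lder bound $d_p(f(x),f(y)) \leq 2 d(x,y)^p$ for $0 < p < 1$; uniform continuity of $f$ follows in either regime.

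There is no serious obstacle in this argument: the elaborate truncation machinery of Theorem \ref{theorem5} was forced on us only by the bounded nature of the $\ell_0$-metric, whereas here the decay $x^g \to 0$ for points that crowd together supplies the required control for free. The only minor care needed is to distinguish the genuine $\ell_p$-norm regime ($p \geq 1$) from the quasi-metric regime $d_p(u,v) = \sum_n |u_n - v_n|^p$ valid for $0 < p < 1$.
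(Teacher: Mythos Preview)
Your proof is correct and takes a genuinely simpler route than the paper's. The paper reuses the layered construction of Theorem~\ref{theorem5}: it keeps the sets $A_n$ and the coordinate maps $f_n$, merely shrinking the range of each injection $h_n$ to lie in $(0,2^{-n/p})$, and then bounds $\|x^f-y^f\|_p^p$ by the same tail sum $\sum_{n\geq\kappa(x,y)}2^{-n}=2^{1-\kappa(x,y)}\leq d(x,y)$, obtaining the H\"older estimate $\|x^f-y^f\|_p\leq d(x,y)^{1/p}$ for every $p>0$. You instead assign each point its own coordinate axis via $f(x)=x^g\,e_{h(x)}$, so that $f(x)-f(y)$ has only two nonzero entries and the trivial inequality $x^g,y^g\leq d(x,y)$ gives uniform continuity in one line. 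What you gain is directness and, for $p\geq 1$, a genuinely Lipschitz map---something the paper only recovers later through the more abstract Theorem~\ref{discrete}; what the paper's approach buys is a single template that visibly specializes the $\ell_0$ argument with a one-parameter tweak. As a small aside, your closing remark that the machinery of Theorem~\ref{theorem5} was ``forced'' by the boundedness of the $\ell_0$-metric is not quite accurate: your own map $x\mapsto x^g\,e_{h(x)}$ is already a Lipschitz injection into $\ell_0$ too, since $\tfrac{x^g}{1+x^g}\leq x^g\leq d(x,y)$.
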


\begin{proof}
Let $p > 0$ and let $(Z,d), g, h_{n}, f$ and $\kappa$ be as in the proof of Theorem \ref{theorem5}, with the
exception that $x^{h_{n}} \in (0,2^{-n/p})$ when $2^{1-n}\leq x^{g}$. We then evidently have:
\[
\|x^f-y^f\|_{p}^{p}= \sum_{n=\kappa(x,y)}^\infty |x^{f_n}-y^{f_n}|^p
\leq \sum_{n=\kappa(x,y)}^\infty |2^{-n/p}|^p.
\]
The latter sum is $2^{1-\kappa(x,y)}$ as before, so we see that $\|x^f-y^f\|_p\leq d(x,y)^{1/p}$.
\end{proof}

\noindent It turns out that Theorems \ref{theorem5} and \ref{theorem6} are special instances of a
more general phenomenon concerning the existence of Lipschitz injections. Recall that a metric space $(Z,d)$ is
said to be \textit{discrete} if, for each $x \in Z$, there exists a $\delta > 0$ such that $d(x,y) > \delta$
for all $y \not= x$. For example, all locally finite metric spaces are discrete.

\begin{thm}\label{discrete}
Let $(Z,d)$ be a given discrete metric space and suppose that $(Y,\rho)$ is any metric space
that contains a nested sequence of open balls $B_{1} \supset B_{2} \supset B_{3} \supset \ldots$
such that $(1)$ the diameter $t_{n}$ of $B_{n}$ tends to $0$ as $n \rightarrow \infty$, and
$(2)$ $|Z| \leq |B_{n}|$ for all $n \in \mathbb{N}$. Then, $Z$ may be Lipschitz injected into $Y$.
\end{thm}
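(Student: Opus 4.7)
The plan is to place each point $x$ of $Z$ inside a nested ball $B_{n(x)}$ whose diameter is at most the minimum gap $x^g := \inf\{d(x,y) : y \in Z \setminus \{x\}\}$ at $x$, which is positive because $(Z,d)$ is discrete. Since $t_n \to 0$, the integer $n(x) := \min\{n \in \mathbb{N} : t_n \leq x^g\}$ is well-defined. Because the $B_n$ are nested, any pair $f(x), f(y)$ will automatically lie in a common ball, and the diameter bound on that ball will supply the desired Lipschitz estimate directly.

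To construct an injection $f : Z \to Y$ with $f(x) \in B_{n(x)}$ for every $x \in Z$, I would proceed by (transfinite) recursion. If $Z$ is countable, an enumeration $z_1, z_2, \ldots$ suffices: at stage $i$, only finitely many points of the infinite set $B_{n(z_i)}$ have been used (invoking $|B_{n(z_i)}| \geq |Z| = \aleph_0$), so a fresh element is always available. For uncountable $Z$, I would well-order $Z$ by the initial ordinal of cardinality $|Z|$; at each stage $x$ the set of previously chosen images has cardinality $|\{y < x\}| < |Z| \leq |B_{n(x)}|$, so $B_{n(x)}$ still contains an unused element to serve as $f(x)$.

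It remains to verify that $f$ is Lipschitz of order $1$. Given distinct $x, y \in Z$, we may assume $n(x) \leq n(y)$, in which case $f(y) \in B_{n(y)} \subseteq B_{n(x)}$ by nesting, so both images lie in $B_{n(x)}$. Consequently, $\rho(f(x), f(y)) \leq t_{n(x)} \leq x^g \leq d(x,y)$, establishing the Lipschitz bound. The main obstacle is the set-theoretic construction of $f$ in the uncountable case, where one must exploit the hypothesis $|Z| \leq |B_n|$ through a transfinite recursion rather than a naive induction; once $f$ is in place, the Lipschitz estimate is an immediate consequence of the nesting and the choice of $n(x)$.
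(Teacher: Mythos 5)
Your proposal is correct and follows essentially the same route as the paper: define the gap $x^{g}$, send $x$ into the first ball whose diameter does not exceed $x^{g}$, and use nesting to bound $\rho(f(x),f(y))$ by $\max\{x^{g},y^{g}\}\leq d(x,y)$. The only difference is that you spell out, via (transfinite) recursion, the injectivity step that the paper simply asserts from the cardinality hypothesis $|Z|\leq|B_{n}|$.
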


\begin{proof}
Suppose $(Z,d)$ and $(Y,\rho)$ satisfy the hypotheses of the theorem with $|Z| > 1$.
For each $x \in Z$ and $t > 0$, we define the functions $x^{g} = \inf \{ d(x,y) : y \not= x \}$ and
$t^{h} = \min \{ n \in \mathbb{N} : t_{n} \leq t \}$. By conditions $(1)$ and $(2)$ there must exist a
function $f : Z \to Y$ that maps each $x \in Z$ uniquely into $B_{(x^{g})^{h}}$. For any $x,y \in Z$,
we then have $x^{f}, y^{f} \in B_{(\max \{ x^{g}, y^{g} \})^{h}}$, implying that
$\rho(x^{f}, y^{f}) \leq \max \{ x^{g}, y^{g} \} \leq d(x,y)$. Hence the function $f$ is a
Lipschitz injection of order $1$.
\end{proof}

\begin{rem}
Theorem \ref{discrete} is easily seen to imply Theorems \ref{theorem5} and \ref{theorem6}. In fact,
Theorem \ref{discrete} implies a considerably stronger but less explicit version of Theorem \ref{theorem6}:
For each locally finite metric space $(Z,d)$ and each $p > 0$, there exists a Lipschitz injection $f : Z \to \ell_{p}$.
This is significant in the context of Theorem \ref{theorem2}. For coarse embeddings the behavior of
$f^{-1}$ is governed by the condition on the function $\rho_{1}$ that is described in Definition \ref{coarse}.
The existence of the Lipschitz injections implied by Theorem \ref{discrete}, combined with the fact
that $\ell_{p}$ has positive generalized roundness whenever $0 < p \leq 2$, makes it clear that we
cannot dispense with the function $\rho_{1}$ and expect to maintain the conclusion of Theorem \ref{theorem2}
even if $\rho_{2}(t) = t$ for all $t > 0$.
We further deduce from Theorem \ref{discrete} that any locally finite metric space may be Lipschitz
injected into any metric space that contains a Cauchy sequence of distinct elements.
Indeed, any countable discrete metric space $(Z,d)$ may be Lipschitz injected into any metric space
that contains a Cauchy sequence of distinct elements. For if $\{x_n\}$ is such a Cauchy sequence in
$(Z,d)$ and $m \in \mathbb{N}$, we may let $B_m$ be any open ball of diameter $\leq 1/m$ that contains
all but finitely many of the $x_n$'s.
\end{rem}

\noindent In conjunction with existing theory, Theorems \ref{theorem2} and \ref{theorem3} say something interesting
about the Banach space $c_0$. Recall that $c_{0}$ consists of all real sequences that converge to zero endowed
with the supremum norm. It was shown by Aharoni \cite{Aha} that every separable metric space admits a Lipschitz embedding
into $c_0$. Thus, there exists a Lipschitz embedding $(\mathfrak{Z}, \zeta) \to c_0$.
Now let $B$ be a metric space of non zero generalized roundness. Lipschitz embeddings are both uniform
and coarse embeddings, so if there existed an embedding $c_0\to B$ that was either a uniform
or coarse embedding, then there would exist such an embedding $(\mathfrak{Z}, \zeta) \to B$. However,
this is not possible according to Theorems \ref{theorem2} and \ref{theorem3}. More generally, these remarks
apply to any metric space which is Lipschitz universal for all locally finite metric spaces and not just $c_{0}$.
Interesting metric spaces of this nature include the widely studied Urysohn space $\mathbb{U}$.
We therefore obtain the final result of this paper. We should note, however, that in the case of uniform embeddings,
this result follows from \cite[Theorem 2.1]{Enf} by a minor modification.

\begin{cor}\label{lulfm}
The Banach space $c_0$ is neither uniformly nor coarsely embeddable into any metric space of
non zero generalized roundness. More generally, any metric space which is Lipschitz universal for
all locally finite metric spaces is neither uniformly nor coarsely embeddable into any metric space of
non zero generalized roundness.
\end{cor}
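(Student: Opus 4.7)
The plan is to deduce both assertions by composition and contradiction, using the obstructions to embedding $(\mathfrak{Z},\zeta)$ established in Theorems \ref{theorem2} and \ref{theorem3}. First I would note, as already recorded in Definition \ref{enflowed}, that $(\mathfrak{Z},\zeta)$ is a locally finite metric space and hence countable, so in particular separable, which is what is needed to apply Aharoni's theorem.

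For the statement concerning $c_{0}$, I would invoke Aharoni's theorem \cite{Aha} to secure a Lipschitz embedding $\iota : (\mathfrak{Z},\zeta) \to c_{0}$. Suppose toward a contradiction that there exists a uniform embedding $g : c_{0} \to B$ into some metric space $(B,d)$ of non zero generalized roundness. Since $\iota$ is bi-Lipschitz onto its image it is in particular a uniform embedding, and the composition of two uniform embeddings is a uniform embedding, so $g \circ \iota : (\mathfrak{Z},\zeta) \to B$ is a uniform embedding, contradicting Theorem \ref{theorem3}. The coarse case is entirely parallel: a bi-Lipschitz map is automatically a coarse embedding with linear control functions, and compositions of coarse embeddings are coarse embeddings, so from a hypothetical coarse embedding $c_{0} \to B$ we would manufacture a coarse embedding $(\mathfrak{Z},\zeta) \to B$, contradicting Theorem \ref{theorem2}.

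The second assertion is obtained by the same argument with Aharoni's theorem replaced by the hypothesis of Lipschitz universality. If $Y$ is any metric space that is Lipschitz universal for the class of locally finite metric spaces, then by definition there exists a Lipschitz embedding $\iota : (\mathfrak{Z},\zeta) \to Y$, and the composition argument of the previous paragraph applies verbatim with $c_{0}$ replaced by $Y$.

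The only technical point in the plan is the observation that the composition of a Lipschitz embedding with a uniform (respectively coarse) embedding is again a uniform (respectively coarse) embedding; this is routine from the definitions, since a bi-Lipschitz map preserves both uniform continuity on either side and the two control bounds $\rho_{1},\rho_{2}$ of Definition \ref{coarse} up to multiplicative constants. Consequently I do not anticipate any genuine obstacle, and the corollary follows by simply citing Theorems \ref{theorem2} and \ref{theorem3} at the end.
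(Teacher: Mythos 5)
Your proposal is correct and matches the paper's own argument: the paper likewise invokes Aharoni's theorem to Lipschitz embed the separable space $(\mathfrak{Z},\zeta)$ into $c_0$ (or into any Lipschitz universal space $Y$), notes that Lipschitz embeddings are both uniform and coarse embeddings, and then derives a contradiction with Theorems \ref{theorem2} and \ref{theorem3} by composition. No gaps; the approach is essentially identical.
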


\noindent As already noted, the preceding corollary implies that the Urysohn space $\mathbb{U}$ is neither uniformly nor coarsely
embeddable into any metric space of non zero generalized roundness. It is very interesting to compare this result to
a theorem of Pestov \cite{Vpe}: The Urysohn space $\mathbb{U}$ is neither uniformly nor coarsely embeddable
into any uniformly convex Banach space. Notice, for example, that the Banach space $\ell_{1}$ has non zero generalized roundness
but it is not uniformly convex. On the other hand, the Banach space $\ell_{3}$ is uniformly convex but it does not have non zero
generalized roundness.

\section*{Acknowledgments}

\noindent The research presented in this paper was undertaken and completed at the 2011
Cornell University \textit{Summer Mathematics Institute} (SMI). The authors would like to thank the
Department of Mathematics and the Center for Applied Mathematics at Cornell University
for supporting this project, and the National Science Foundation for its financial support of
the SMI through NSF grant DMS-0739338. The last named author thanks Canisius College for a
faculty summer research fellowship.

\bibliographystyle{amsalpha}

\end{document}